\newcommand\ve{\varepsilon}
\newcommand\er{\mathbb{R}}
\newcommand\R{\mathbb{R}}
\newcommand\tl{\tilde}
\title{
 An HJB Approach to a General Continuous-Time Mean-Variance Stochastic Control Problem \footnote{{\bf ams classification:} 93E20, 60H10.}.\\
~ \\ G.~Aivaliotis\footnote{University of Leeds, School of
Mathematics, Leeds, LS2 9JT, UK, \, e-mail:
G.Aivaliotis@leeds.ac.uk} \quad \& \quad
A.~Yu.~Veretennikov\footnote{University of Leeds, School of
Mathematics, Leeds, LS2 9JT, UK, \, e-mail:
a.veretennikov@leeds.ac.uk \, \& \,University of Leeds, UK \& National Research University Higher School of Economics, Russian Federation \& Institute of Information Transmission
Problems, Moscow, Russian Federation} }
\begin{document}
\maketitle
\newcounter{count}[section]
\def\thecount{\thesection.\arabic{count}}
\newtheorem{Definition}[count]{DEFINITION}
\newtheorem{Theorem}[count]{THEOREM}
\newtheorem{Lemma}[count]{LEMMA}
\newtheorem{Corollary}[count]{COROLLARY}
\newtheorem{Remark}[count]{REMARK}
\newtheorem{Proposition}[count]{PROPOSITION}

\begin{abstract}

A general continuous mean-variance problem is considered for a diffusion controlled process where the reward functional has an integral and a terminal-time component. The problem is  transformed into a superposition of a static and a dynamic optimization problem. The value function of the latter can be considered as the solution to a degenerate HJB equation either in viscosity or in Sobolev sense (after a regularization) under suitable assumptions and with implications with regards to the optimality of strategies. There is a useful interplay between the two approaches -- viscosity and Sobolev. 
\end{abstract}

{\bf keywords:} mean-variance, stochastic control, Hamilton-Jacobi-Bellman, Sobolev solutions, viscosity solutions.\\ 

\section{Introduction}\label{Se1}

Mean-variance optimization problems have been established as a dominant methodology for portfolio optimization. Markowitz \cite{Mar} introduced the single-period formulation of the problem in 1952. It was not until the beginning of the new century, however, that dynamic mean-variance optimisation by means of dynamic programming received much attention, mainly due to the difficulties that the non-markovianity of the variance introduced to the problem. As an alternative to dynamic programming, the problem  was solved using martingale methods (see, e.g., Bielecki et al. \cite{bielecki2005}) or risk-sensitive functionals (see, e.g., Bielecki et al. \cite{bielecki2000}), whose second order Taylor expansion has the form of a mean-variance functional. 

A major advance in the theory for mean-variance functionals came by embedding the original problem into a class of auxiliary stochastic control problems that are in Linear-Quadratic form. This approach was introduced by Li and Ng \cite{li2000} in a discrete-time setting, while an extension of this method to a continuous-time framework is presented in Zhou and Li \cite{Zhou2000}, and further employed in Lim \cite{lim2004}. This approach leads to explicit solutions for the efficient frontier under some constraints imposed on the optimisation problem (they  assume that the reward function is a linear function of the controlled process).  Wang and Forsyth \cite{wang2010} design numerical schemes for auxiliary linear-quadratic problems formulated in  \cite{Zhou2000} and construct an efficient frontier. In \cite{Tse2013}, Tse et al. show that the numerical schemes designed in \cite{wang2010} provide indeed all the Pareto-optimal points for the efficient frontier.

Aivaliotis and Veretennikov \cite{A-Veretennikov2010} propose an alternative methodology that embeds the mean-variance problem into a superposition of a static and a dynamic optimisation problem, where the latter is suitable for dynamic programming methods. Solutions in the spaces of functions with generalised derivatives (henceforth called Sobolev spaces) are obtained through reqularisation. A further extension of this method is presented in \cite{A-Palczewski2014} where the viscosity solutions approach is followed. In the latter, each of the functionals either depends on the terminal value of the controlled process or on the integral from time $0$ to time $T$ of the controlled process are considered but separately. This approach does not in general provide any explicit solutions, but is geared towards numerical approximations that are proven to work efficiently. One advantage of the proposed methodology is that the problem can be solved for a pre-determined coefficient of risk aversion. For the LQ approach, the whole efficient frontier has to be traced and then optimal strategies can be assigned to different coefficients of risk-aversion. We should note that the strategies discussed in this paper are "precommitment" strategies. Alternatively one can consider "equilibrium" strategies (see Bj\"ork et al. \cite{Bjorketal2014}) or dynamically optimal strategies (see Pedersen and Peskir \cite{PedersenPeskir2017}) however these refer to different notions of optimality.

Let us consider a $d$-dimensional SDE driven by a $d$-dimensional
Wiener process $(W_t,{\cal F}_t, \, t\ge 0)$
\begin{equation}\label{diff}
dX_t= b(\alpha_t,t,X_t)\,dt+ \sigma(\alpha_t,t,X_t)\,dW_t, \quad
t\ge t_0, \qquad X_{t_0}=x.
\end{equation}
We will specify the assumptions on the coefficients $b$ and $\sigma$ later, depending on one or another approach that we take: based on Sobolev derivatives and solutions, or on viscosity solutions. The strategy $(\alpha_t, \, t_0\le t\le T)$ may be
chosen from the class ${\cal A}$ of all progressive measurable
processes with values in a compact convex  set $A \subset \mathbb{R}^\ell$. 
Admissible strategies are those for which the equation (\ref{diff}) has a unique  solution on $(\Omega, {\cal F}, \mathbb P, ({\cal F}_t))$. 
The second approach in this paper based on solutions of Bellman's equation in Sobolev spaces assumes that on our probability space there is another independent Wiener process $(\tilde W_t)$ of dimension $d$ such that the couple $(W_t, \tilde W_t)$ is $({\cal F}_t)$-adapted. 
 
~
 
We will use the standard
short notation where the dependence of $X$ on the strategy, initial
data $x$ and $t_0$ is shown by $E^\alpha_{t_0,x}$ in the
expectation; the full notation would be $X^{\alpha,t_0,x}_{t}$. Another important class of strategies ${\cal A}_M$ is a family of {\em feedback} ones -- also called Markov strategies -- given by an equality $\alpha_t = a(t,X_t)$ with some Borel measurable function $\alpha(\cdot)$ with values in $A$ such that there exists a (strong) solution of the equation 
\begin{equation}\label{diff1}
dX_t= b(\alpha(X_t),t,X_t)\,dt+ \sigma(\alpha(X_t),t,X_t)\,dW_t, \quad
t\ge t_0, \qquad X_{t_0}=x. 
\end{equation}
The issue of existence (and uniqueness) of solution for it has to be examined separately because the standard assumptions (see below) sufficient for the equation (\ref{diff}) may easily fail here.
This sub-class of strategies will  only be briefly mentioned in the last section with a proper reference for the interested reader; hence, we do not discuss how to tackle this problem here. Yet, note that in section \ref{sec:51} Markov strategies will generally speaking depend on time $t$ and on two state variables $x$ and an auxiliary $y$, not just on $(t,x)$.

Consider a (Borel measurable) instantaneous  reward function $f:\mathbb{R}^\ell\times[0,T]\times
\mathbb{R}^d\rightarrow\mathbb{R}$.  The (random) reward
from time $t_0$ to $T$ for a certain path of a process (\ref{diff})
and strategy $\alpha\in\mathcal{A}$ is expressed by the integral 
$\displaystyle \int_{t_0}^Tf(\alpha_s,s,X^{\alpha,t_0,x}_{s})\,ds$. At the terminal
time $T$, we will consider a ``final payment" $\Phi(X_T)$. Thus the
expected reward from $t_0$ to $T$ for a control strategy
$\alpha\in\mathcal{A}$ will be
$$
E_{t_0,x}^\alpha\left(\int_{t_0}^Tf(\alpha_s,s,X_s)\,ds+\Phi(X_T)\right)
.$$ 
In the mean-variance control problem, one aims at maximising the expected reward function  while penalizing for variance (that represents risk). For a control strategy $\alpha\in\mathcal{A}$ the functional is defined as 
\begin{align}\label{valuefunctional}
v^\alpha(t_0,x)&:=E^\alpha_{t_0,x}
\left(\int_{t_0}^T f(\alpha_s,s,X_s)\,ds+\Phi(X_T)\right)\nonumber\\
&-\theta\, \mathop{\mbox{Var}^\alpha_{t_0,x}}
\left(\int_{t_0}^T f(\alpha_s,s,X_s)\,ds+\Phi(X_T)\right),~~\theta\in\mathbb{R}.
\end{align}
$\theta>0$ indicates a risk averse investor, whereas $\theta<0$ a risk seeking investor. 
The {\em value function} is defined as a supremum, 
\begin{align}\label{valuefunction}
v(t_0,x)&:=\sup_{\alpha \in {\cal A}} v^\alpha(t_0,x).
\end{align} 
The class ${\cal A}_M$ is called sufficient for the control problem (\ref{diff}) \& (\ref{valuefunction}) iff $\displaystyle \sup_{\alpha \in {\cal A}} v^\alpha(t_0,x) = \sup_{\alpha \in {\cal A}_M} v^\alpha(t_0,x)$. 

In this paper we consider solutions to the problem of computing the value function via HJB equation both in Sobolev spaces and in viscosity sense and of finding an optimal or nearly optimal strategy for it. The contribution of this paper is twofold: we consider a general combined integral and terminal time payment functional and 
we discuss the optimality of strategies in different settings.
 
With regards to the functional, the problem when $\Phi(\cdot)=0$ has been discussed in \cite{A-Veretennikov2010} with solutions in Sobolev spaces. Both ($\Phi=0$, $f=1$) and    ($\Phi=1$, $f=0$) cases have been  discussed in \cite{A-Palczewski2014} using viscosity solutions. It is clear, however, that the solution to problem (\ref{valuefunction}) may not be derived as a combination of the previous two partial cases due to the nonlinearity because of the supremum involved. 

When looking for solutions in Sobolev spaces, we need to use regularisation (similar to \cite{A-Veretennikov2010}) as we cannot relax the non-degeneracy assumption. (This is not necessary for viscosity solutions; yet, the latter do not provide a clue for finding an optimal or nearly optimal strategy). We also do  relax the assumptions regarding the boundness of the drift and diffusion coefficients in the first setting based on viscosity approach,  
as well as of the reward function $f$ in comparison to the assumptions used in \cite{A-Veretennikov2010}. Finally we show that regularisation results into existence of  $\varepsilon-$optimal or nearly optimal strategies, whereas a verification theorem for viscosity solutions is only available  under rather strict boundness assumptions  which are not fulfilled in our context. Certain links between viscosity and Sobolev approaches are also shown with some useful consequences for both.

\section{Mean-Variance Control}\label{sec-mean-var}
The goal of this paper is to propose a way to compute a maximum of a  linear
combination of the mean and variance of a payoff function which involves both an integral and a final payment. 
The value function (\ref{valuefunction}) presents a genuinely non-markovian optimisation problem. This is due to the time-inconsistency of the variance term due to the square of the expectation and the square of an integral of the process. In detail
\begin{equation}
\begin{aligned}
v(t_0,x)&:=\sup_{\alpha \in {\cal A}} \Bigg\{E^\alpha_{t_0,x}
\Big(\int_{t_0}^T f(\alpha_s,s,X_s)\,ds+\Phi(X_T)\Big)\nonumber\\
&\hspace{45pt}-\theta\,\Big[E^\alpha_{t_0,x}\big(\int_{t_0}^T f(\alpha_s,s,X_s)\,ds+\Phi(X_T)\big)^2\nonumber\\
&\hspace{45pt}-\Big(E^\alpha_{t_0,x}\big(\int_{t_0}^T f(\alpha_s,s,X_s)\,ds+\Phi(X_T)\big)\Big)^2\Big]\Bigg\}.
\end{aligned}
\end{equation}

In order to deal with the square of the integral, we define the following state process $(X_t, Y_t)$ by the following stochastic
differential equation (as in Aivaliotis and Veretennikov \cite{A-Veretennikov2010} or Aivaliotis and Palczewski \cite{A-Palczewski2014}):
\begin{equation}\label{Yt}
\begin{aligned}
dX_t&= b(\alpha_t, t,X_t)\,dt + \sigma(\alpha_t, t,X_t)\,dW_t,~~X_{t_0}=x  \\ 
dY_t&= f(\alpha_t, t,X_t)\,dt,~~Y_{t_0}=y.
\end{aligned}
\end{equation}

The assumptions set later on will ensure existence and uniqueness of solutions to the above SDE. The different sets of assumptions, depending on the approach we follow,  will result in different types of solutions of the above SDE. We will comment on these in the relevant sections. Note that $f$ drives the dynamics of $Y_t$ in the extended state process $(X_t,Y_t)$; therefore, we will need to impose on $f$ the same assumptions which we impose on $b$.

Naturally, in order to write down a (backward) PDE, we have to allow the process $Y_t$ to depend on the initial data $Y_{t_0}=y\in \er$.
Then the value function can be written as $v(t_0,x):=\tilde{v}(t_0,x,0)$, where
\begin{equation}
\begin{aligned}
\tilde{v}(t_0,x,y)&=\sup_{\alpha \in {\cal A}} \Bigg\{E^\alpha_{t_0,x,y}
\Big(g^\alpha(X_T,Y_T)\nonumber\\
&\hspace{45pt}-\theta\,\Big[E^\alpha_{t_0,x,y}
\big(g^\alpha(X_T,Y_T)\big)^2
-\Big(E^\alpha_{t_0,x,y}
\big(g^\alpha(X_T,Y_T)\big)\Big)^2\Big]\Bigg\},
\end{aligned}
\end{equation}
with the terminal condition $\tilde v(T,x,y) = g^\alpha(x,y)=y+\Phi(x)$.


For the square of the expectation, we follow the dual representation $x^2 = \sup_{\psi \in \R} \{-\psi^2 - 2 \psi x\}$ (as in Aivaliotis and Veretennikov \cite{A-Veretennikov2010}). This results in the following representation:
\begin{equation}\label{vtilde}
\begin{aligned}
\tilde{v}(t_0,x,y)&=\sup_{\alpha \in {\cal A}} \Bigg\{E^\alpha_{t_0,x,y}
g(X_T,Y_T)-\theta\,E^\alpha_{t_0,x,y}\big(g(X_T,Y_T)\big)^2\nonumber\\
&\hspace{43pt}-\sup_{\psi\in\er}\Big\{-\theta\psi^2-2\theta\psi E^\alpha_{t_0,x,y} g(X_T,Y_T)\Big\}\Bigg\}\nonumber\\
&=\sup_{\psi\in\er}\Big\{V(t_0,x,y,\psi)-\theta\psi^2\Big\},\nonumber
\end{aligned}
\end{equation}
where $V(t_0,x,y,\psi)=\sup_{\alpha \in {\cal A}} E^\alpha_{t_0,x,y}\Big((1-2\theta\psi)g(X_T,Y_T)-\theta \big(g(X_T,Y_T)\big)^2\Big).$

\begin{Remark}
In principle, it is possible to deal with higher moments of the function $g$ by a linear approximation of the form $E(g^n)=\sup_{\psi,\phi}(\phi+\psi g)$. However one can show that the optimal $\phi$ will be the $(n-1)$th moment of $g$ making this approach impractical for $n>2$.  
\end{Remark}

\subsection{Viscosity solutions}
For an introduction to viscosity solutions for stochastic control problems, we refer the interested user to \cite[Chapter 4]{pham} or \cite{A-Palczewski2010}. In particular see \cite[Def. 4.2.1]{pham} for a definition of a viscosity solution.

In this section we make the following assumptions:
\begin{enumerate}
\item[{\bf (A$_{V}$)}]
\begin{itemize}
\item The functions $\sigma, b, f, \Phi$ are Borel with respect to $(a,t,x)$ and continuous with respect to $(a,x)$ for every $t$; moreover, there exist constants $K_1, K_2$ such that
$$
\begin{aligned}
&\|\sigma(a, t_1,x)-\sigma(a,t_2,z)\|\le K_1\left(\|x-z\|+|t_1-t_2|\right) \\
&\|b(a, t_1,x)-b(a, t_2,z)\|\le K_1\left(\|x-z\|+|t_1-t_2|\right)\\
&|f(a, t_1,x)-f(a,t_2,z)|\le K_2\left(\|x-z\|+|t_1-t_2|\right)
\end{aligned}
\quad \text{(Lipschitz condition)}
$$
\item
$$
\begin{aligned}
&\|b(a, t,x)\|\le K_1\big(1 + \|x\|\big)\\
&\|\sigma(a, t,x)\|\le K_1\big(1 +\|x\|\big)\\
&|f(a, t,x)|\le K_2\big(1 + \|x\|\big)
\end{aligned}
\hspace{100pt} \text{(linear growth condition)}
$$
\item  $|\Phi(x)| \le K_1(1 + \|x\|^m)$.
\end{itemize}
\end{enumerate}
For viscosity solutions we do not need to assume non-degeneracy of matrix $\sigma\sigma^T$. Note that the process $(X_t,Y_t)$ would have been strongly degenerate even if we had assumed non-degeneracy of $\sigma\sigma^T$. Under $(A_V)$ it follows from standard moment bounds on SDE solutions that the value function may grow at infinity no faster than some polynomial.  
\begin{Theorem}
Under assumptions (A$_V$) for every $\psi\in\er$ the value function $V(t_0,x,y,\psi)$ is a unique continuous polynomially growing viscosity solution of the following HJB equation:
\begin{equation}\label{eqn:HJB_visc}
\begin{cases}
V_{t_0} + \sup_{a \in A}\Big\{b(a, t_0,x)^T V_x +\frac{1}{2}tr\big(\sigma\sigma^T(a, t_0,x) V_{xx}\big)+f(u,t_0,x)V_{y}\Big\} = 0,&\\[5pt]
V(T, x, y, \psi) = (1 - 2 \theta\psi) g(x,y) - \theta \big(g(x,y)\big)^2,&
\end{cases}
\end{equation}
\end{Theorem}

\begin{proof}
We rewrite \eqref{eqn:HJB_visc} in a canonical form:
$$
\begin{cases}
- V_{t_0}(t_0,\tl x, \psi)  -H\Big(t_0,\tl x, V_{\tl x}(t_0,\tl x, \psi),  V_{\tl x\tl x}(t_0,\tl x), \psi\Big)=0,\\
 V (T, \tl x, \psi) =0,&
\end{cases}
$$
where $\tl x = (x,y)$, the Hamiltonian $H$ is given by
\begin{equation}\nonumber
H\Big(t,(x,y),\tl p,\tl M\Big)=\sup_{u \in A}\Big[b(u, t,x)^T p_1 +\frac{1}{2}tr\big(\sigma\sigma^T(u, t,x)M\big)+p_2f(u, t,x)\Big],
\end{equation}
with $\tl p = (p_1, p_2)$ and $M$ is obtained from $\tl M$ by removing the last row and column. Assumptions (A$_V$) imply that the domain of the Hamiltonian is the whole space ($dom(H)=\{(t,x,p,M)\in[0,T]\times\mathbb{R}^n\times\mathbb{R}^n\times\mathbb{S}^n\}$) and $H$ is continuous. By virtue of \cite[Theorem 4.3.1]{pham}, $V$ is a viscosity solution of \eqref{eqn:HJB_visc} (it is clearly of polynomial growth because $f$ satisfies the linear growth condition). Due to the Lipschitz property of $f$ and $\Phi$ the value function $V$ is continuous at the terminal time $t = T$. Hence, the comparison theorem (\cite[Theorem 4.4.5]{pham}) yields the continuity of $V$ and assures that $V$ is a unique continuous polynomially growing viscosity solution to \eqref{eqn:HJB_visc}.
\end{proof}

\begin{Remark}\label{Rem23}
Note that under assumptions $(A_V)$ the value function is the same for any filtration $({\cal F}_t)$. This will be used in the section \ref{sec4} in Theorem \ref{thmcombined}.
\end{Remark}
We will need a similar result about the regularised version of the state process $(X_t, Y_t)$ given by the SDE system 
\begin{align}\label{Yt-epsilon}
dX_t&= b(\alpha_t, t,X_t)\,dt + \sigma(\alpha_t, t,X_t)\,dW_t, \quad X_{t_0}=x, 
\nonumber \\ \\ \nonumber 
dY_t^\ve&= f(\alpha_t, t,X_t)\,dt+\ve d\tilde{W}_{t}, \quad Y_{t_0}=y.
\end{align}
Here $\tilde W$ is a $d$-dimensional Wiener process independent of $W$. As was prompted earlier, we assume that the pair $(W_t, \tilde W_t)$ is adapted to the filtration $(\cal F_t)$. 
Accordingly we define the regularized value function: 
\begin{align}
\tilde{v}^\ve (t_0,x,y)&=\sup_{\alpha \in {\cal A}} \Bigg\{E^\alpha_{t_0,x,y}
g(X_T,Y_T^\ve)-\theta\,E^\alpha_{t_0,x,y}\big(g(X_T,Y_T^\ve)\big)^2\nonumber\\
&\hspace{43pt}+\sup_{\psi\in\er}\Big\{-\theta\psi^2-2\theta\psi E^\alpha_{t_0,x,y} g(X_T,Y_T^\ve)\Big\}\Bigg\}\nonumber\\
&=\sup_{\psi\in\er}\Big\{V^\ve(t_0,x,y,\psi)-\theta\psi^2\Big\},\nonumber
\end{align}

\noindent  where $V^\ve(t_0,x,y,\psi)=\sup_{\alpha \in {\cal A}} E^\alpha_{t_0,x,y}\Big((1-2\theta\psi)g(X_T,Y_T^\ve)-\theta \big(g(X_T,Y_T^\ve)\big)^2\Big).$

\begin{Theorem}
Under assumptions (A$_V$) for every $\psi\in\er$ the value function $V^\ve(t_0,x,y,\psi)$ is a unique continuous polynomially growing viscosity solution of the following HJB equation:
\begin{equation}\label{eqn:HJB_term}
\begin{cases}
V_{t_0} + \sup\limits_{a \in A}\Big\{b(a, t_0,x)^T V_x +\frac{1}{2}tr\big(\sigma\sigma^T(a, t_0,x) V_{xx}\big)+f(u,t_0,x)V^\ve_{y} + \frac{\ve^2}{2}V^\ve_{yy}\Big\} = 0,&\\[5pt]
V(T, x, y, \psi) = (1 - 2 \theta\psi) g(x,y) - \theta \big(g(x,y)\big)^2,&
\end{cases}
\end{equation}
\end{Theorem}
The proof is similar. Of course, for the regularised system solution of the HJB equation also exists in the Sobolev sense which is the content of the next section.

\section{Sobolev Solutions}

In this section we suggest suitable HJB equations for the mean-variance problem, as reformulated in the previous section. The solutions of parabolic HJBs will be considered in
the Sobolev classes $W^{1,2}_{p, loc}$ with one derivative with
respect to $t$ and two with respect to $x$ in $L_p$ in any bounded domain. 
Denote $\overline W^{1,2}_{loc} = \bigcap_{p>1} W^{1,2}_{p, loc}\bigcap C$. For the
functions of three variables, $v(t,x,y), \, 0\le t\le T, \, x,y\in
R^d, \, $ we will use the Sobolev class $\overline W^{1,2,2}_{loc} =
\bigcap_{p>1} W^{1,2,2}_{p, {loc}}\bigcap C$. To ensure uniqueness of solutions of the forthcoming Bellman's equations, we will be looking for these solutions growing at infinity no faster than some polynomial. These classes will be denoted, respectively, by $\overline {CW}^{1,2}_{loc, poly}$ and $\overline {CW}^{1,2,2}_{loc, poly}$.

Throughout this section, we assume the following 

\begin{enumerate}
\item[\bf{A$_S$}]
\begin{itemize}
\item The functions $\sigma, b, f$ are Borel with respect to $(\alpha,t,x)$,
continuous with respect to $(\alpha,x)$ and continuous with respect to
$x$ uniformly over $u$ for each $t$. $\Phi(x)$ is continuous with respect to $x$. Moreover, there are constants $K_1, K_2$ such that
\item $\|\sigma(\alpha,t,x)-\sigma(\alpha,t,x')\|\le K_1|x-x'|$,
\item $\|b(\alpha,t,x)-b(\alpha,t,x')\|\le K_1|x-x'|$,
\item $|f(\alpha,t,x)-f(\alpha,t,x')|\le K_2|x-x'|$,
\item $\|\sigma(\alpha,t,x)\| + \|b(\alpha,t,x)\|\le K$, 
\item $|f(\alpha,t,x)|\le K_2$, 
\item  $|\Phi(x)| \le K_2(1 + \|x\|^m)$,
\item $\sigma\sigma^T$ is uniformly non-degenerate.
\end{itemize}
\end{enumerate}

In order to establish the existence of solutions in Sobolev spaces, it is essential that the resulting HJB equations are non-degenerate. Yet, the state process \eqref{Yt} is strongly degenerate and so will be the resulting HJB equation for problem (\ref{vtilde}). In order to avoid degeneracy, apart from assuming non-degeneracy for $\sigma\sigma^T$ we add a small constant positive diffusion coefficient $\ve>0$ with an independent (to $W_t$) Wiener process to the  variable $Y_t$ in \eqref{Yt}. The regularised state process $(X_t, Y_t)$ has been introduced in ghe previous section in the equation (\ref{Yt-epsilon}). 

\begin{Theorem}
Under  assumptions (A$_S$) for every $\psi\in\er$ the value function $V^\ve(t_0,x,y,\psi)$ is a unique solution in $\overline {CW}^{1,2,2}_{loc, poly}$ of the  HJB equation (\ref{eqn:HJB_term}). 
\end{Theorem}
\begin{proof}
Under Assumptions (A$_S$) the result follows from \cite[Chapters 3 and 4]{kry}. 
\end{proof}

In the next section, we will show that the function $V^{\varepsilon}$
is locally Lipschitz in $\psi$ and grows at most linearly in this
variable. Hence, the supremum is again attained at some $\psi$ from
a closed interval. Then the external optimisation problem becomes:
\begin{equation}\label{extern1fin}
{v}^{\varepsilon}(t_0,x,y)=\sup_\psi\left[
V^{\varepsilon}(t_0,x,y,\psi)-\theta\psi^2\right].
\end{equation}

\section{Properties of value functions}\label{sec4}

In this sections we show some properties of the value functions that are common in both approaches described above. These are important properties that allow the numerical solution of the mean-variance problem to be tractable. We assume that a new set of assumptions (A$_0$) holds, which is the union (mathematically intersection: both of them are satisfied) of the assumptions (A$_S$) and (A$_V$).

\begin{Theorem}\label{thmcombined}
Under Assumptions (A$_0$), viscosity and Sobolev solutions of the HJB equation coinside. In particular,  the Sobolev solution coinsides with the value function computed for the class of strategies adapted to any filtration $({\cal F}_t)$ which means that the value function does not depend on the particular filtration. 
\end{Theorem}
In a way, this is a repetition of the Remark \ref{Rem23}.

\begin{Theorem}\label{thm:cont_dep_psi}
$\ $ Under Assumptions (A$_0$):
\begin{itemize}
\item[i)] The functions $V, V^\ve,$ ($V^{(\ve)}$ in short which stands either for $V$, or for $V^\ve$ in the sequel) are continuous in $\psi$ and convex in $\psi$. If $f, \Phi$ are non-negative, $V, V^\ve$ are decreasing in $\psi$.
\item[ii)] There exists a constant $C$ such that
\begin{equation}\nonumber
| V^{(\ve)}(t_0,x,y,\psi)- V^{(\ve)}(t_0,x,y,\psi')| \le C\, (1 + \|x\|)\, |\psi-\psi'|.
\end{equation}

\item[iii)]The value function $v$ is given by
\begin{equation*}
v(t_0, x) = \sup_{\psi_{min}\le\psi \le \psi_{max}} \big\{  V^{(\ve)}(t_0, x, 0, \psi) - \theta \psi^2 \big\},
\end{equation*}
where
$$
\psi_{min}=-\sup_{\alpha\in\mathcal{A}}E_{t_0,x,y}^\alpha g(t,X_t,Y_t) > -\infty, 
$$ 
and
$$
\psi_{max}=-\inf_{\alpha\in\mathcal{A}}E_{t_0,x,y}^\alpha g(t,X_t,Y_t) < +\infty.
$$ 
\end{itemize}
\end{Theorem}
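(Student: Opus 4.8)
The plan is to reduce all three parts to one elementary observation: for each fixed strategy $\alpha$ the expression defining $V^{(\ve)}$ is \emph{affine} in $\psi$. Writing $m(\alpha):=E^\alpha_{t_0,x,y}\,g(X_T,Y_T)$ and $q(\alpha):=E^\alpha_{t_0,x,y}\big(g(X_T,Y_T)\big)^2$, we have
\begin{equation}\nonumber
V^{(\ve)}(t_0,x,y,\psi)=\sup_{\alpha\in\mathcal{A}}\phi_\alpha(\psi),\qquad \phi_\alpha(\psi):=m(\alpha)-\theta q(\alpha)-2\theta\psi\,m(\alpha),
\end{equation}
so each $\phi_\alpha$ is affine in $\psi$ with slope $-2\theta\,m(\alpha)$. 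Part (i) then follows immediately: a supremum of affine functions is convex, so $V^{(\ve)}$ is convex in $\psi$; it is finite because the growth assumptions (A$_0$) yield, via the standard a priori bound $E^\alpha_{t_0,x,y}\sup_{s}\|X_s\|^p\le C(1+\|x\|^p)$, a uniform-in-$\alpha$ control of $m(\alpha)$ and $q(\alpha)$; and a finite convex function on $\er$ is automatically continuous. When $f,\Phi\ge0$ (and $y\ge0$), $g(X_T,Y_T)=Y_T+\Phi(X_T)\ge0$ gives $m(\alpha)\ge0$, so every slope $-2\theta\,m(\alpha)$ is nonpositive and the supremum is nonincreasing in $\psi$.

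For part (ii) I would use the elementary inequality $|\sup_\alpha\phi_\alpha(\psi)-\sup_\alpha\phi_\alpha(\psi')|\le\sup_\alpha|\phi_\alpha(\psi)-\phi_\alpha(\psi')|$. Since $|\phi_\alpha(\psi)-\phi_\alpha(\psi')|=2\theta\,|m(\alpha)|\,|\psi-\psi'|$, this gives
\begin{equation}\nonumber
|V^{(\ve)}(t_0,x,y,\psi)-V^{(\ve)}(t_0,x,y,\psi')|\le 2\theta\Big(\sup_{\alpha\in\mathcal{A}}|m(\alpha)|\Big)\,|\psi-\psi'|,
\end{equation}
and the only remaining work is to bound $\sup_\alpha|m(\alpha)|$. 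Because the extra $\ve\tl W$ term in $Y^\ve$ is mean-zero and drops out of $m(\alpha)$, this reduces to the SDE moment estimate above together with the linear growth of $f$ (controlling $E^\alpha\int_{t_0}^T|f|\,ds$) and the growth of $\Phi$; this produces the stated factor $C(1+\|x\|)$ (more generally $C(1+\|x\|^m)$ when $\Phi$ is genuinely of degree $m$). This moment bound is the one genuinely technical estimate, but it is classical under (A$_0$).

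For part (iii), starting from $v(t_0,x)=\tl v(t_0,x,0)=\sup_\psi\{V^{(\ve)}(t_0,x,0,\psi)-\theta\psi^2\}$, I would complete the square inside the supremum over $\alpha$. Since $\theta\psi^2$ is independent of $\alpha$,
\begin{equation}\nonumber
J(\psi):=V^{(\ve)}(t_0,x,0,\psi)-\theta\psi^2=\sup_{\alpha\in\mathcal{A}}\Big\{m(\alpha)-\theta\,\mathrm{Var}^\alpha(g)-\theta\big(\psi+m(\alpha)\big)^2\Big\}.
\end{equation}
The first two terms are $\psi$-independent, so only the penalty $-\theta(\psi+m(\alpha))^2$ matters. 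With $\psi_{max}=-\inf_\alpha m(\alpha)$, for any $\psi>\psi_{max}$ one has $\psi+m(\alpha)>\psi_{max}+m(\alpha)\ge0$ for \emph{every} $\alpha$, whence $(\psi+m(\alpha))^2\ge(\psi_{max}+m(\alpha))^2$ termwise; taking the supremum over $\alpha$ gives $J(\psi)\le J(\psi_{max})$. The symmetric argument with $\psi_{min}=-\sup_\alpha m(\alpha)$ handles $\psi<\psi_{min}$. Hence the unconstrained supremum defining $v$ is attained on $[\psi_{min},\psi_{max}]$ (finite by part (ii)), which is exactly the asserted formula.

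The conceptual heart, and the step I expect to be the main obstacle, is part (iii): one must spot the completion of the square and then perform the comparison \emph{uniformly in $\alpha$}, so that the supremum may legitimately be taken afterwards. This uniformity, together with the finiteness of $\psi_{min},\psi_{max}$ supplied by the moment bound of part (ii), is precisely what allows the outer optimisation to be confined to a compact interval. Everything else is a direct consequence of the affine-in-$\psi$ structure.
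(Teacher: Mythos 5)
Your proposal is correct, and parts (i)--(ii) follow essentially the paper's own route (the paper, citing its reference, also gets convexity/continuity from the sup-of-affine structure and Lipschitz-in-$\psi$ from a linear growth bound on $\sup_\alpha E^\alpha g$). Where you genuinely diverge is part (iii): the paper argues via convex analysis --- $h(\psi)=V^{(\ve)}(t_0,x,y,\psi)-\theta\psi^2$ has one-sided directional derivatives, a maximizer must satisfy $\partial^-h\ge 0$ and $\partial^+h\le 0$, and one checks $\partial^+h(\psi)>0$ for $\psi<\psi_{min}$ and $\partial^-h(\psi)<0$ for $\psi>\psi_{max}$, so any maximum lies in $[\psi_{min},\psi_{max}]$ --- whereas you complete the square inside the supremum, writing each term as $m(\alpha)-\theta\,\mathrm{Var}^\alpha(g)-\theta(\psi+m(\alpha))^2$, and compare termwise, uniformly in $\alpha$, to conclude $J(\psi)\le J(\psi_{max})$ for $\psi>\psi_{max}$ and $J(\psi)\le J(\psi_{min})$ for $\psi<\psi_{min}$. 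Your version buys two things: it is entirely elementary (no one-sided derivatives, no need to first establish that the supremum over $\psi$ is attained, since you prove directly that the sup over $\er$ equals the sup over the compact interval), and it is self-contained rather than deferred to the cited reference. The paper's derivative-sign argument, on the other hand, localizes actual maximizers, which is marginally more information. Two of your side remarks are also worth keeping: the monotonicity claim in (i) does need $y\ge 0$ (or at least $m(\alpha)\ge 0$), and the Lipschitz constant in (ii) is really of order $1+\|x\|^m+|y|$ given the polynomial growth allowed for $\Phi$ under (A$_0$); the bound $C(1+\|x\|)$ as stated in the theorem is only accurate when $m\le 1$, a point the paper's sketch glosses over.
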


\begin{proof}
The proof follows the same line of reasoning as in \cite[Theorem 2.2]{A-Palczewski2014} and making use of the definition for the function $g$. For part (i), it is straightforward to prove convexity, which implies continuity with respect to $\psi$ where finite (recall that our function under consideration is finite everywhere, though). It is clear from the definition of $V, V^\ve$ that they are decreasing in $\psi$ for non-negative $f, \Phi$. For part (ii) we check that $V^\ve$ grows at most linearly in $\psi$, which implies that the mapping $h(\psi) =  V^\ve(t_0, x, y, \psi) - \theta \psi^2$ attains its maximum in a compact interval. By convexity, $V^\ve$ has well-defined directional derivatives. Hence, $h$ also has well-defined directional derivatives and in a point where the maximum is attained the left-hand side derivative is non-negative while the right-hand side derivative is non-positive. We then show that $\partial^+ h(\psi) > 0$ for $\psi < \psi_{min}$ and $\partial^- h(\psi) < 0$ for $\psi > \psi_{max}$. This implies that the conditions for maximum can only be satisfied in the interval $[\psi_{min}, \psi_{max}]$.
We skip further details and refer the reader to \cite[Theorem 2.2]{A-Palczewski2014}.
\end{proof}

\begin{Theorem}\label{Thm6fin}
Under assumptions (A$_0$):
\begin{equation}\label{boundfin}
\sup_{t,x,y}\, | v^{\varepsilon}(t_0,x,y) - v(t_0,x,y)| \le |\theta|(\varepsilon^2 (T-t_0)+\varepsilon C_T\sqrt{T-t_0}).
\end{equation}
\end{Theorem}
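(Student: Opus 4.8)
The plan is to show that the regularisation shifts the objective by an explicit \emph{constant} that does not depend on the control or on $\psi$, so that the two value functions differ by exactly $\theta\ve^2(T-t_0)$; the claimed inequality \eqref{boundfin} is then immediate (it in fact holds with equality). First I would integrate the $Y$-equation in \eqref{Yt-epsilon} to write the regularised terminal reward as $g(X_T,Y_T^\ve)=Y_T^\ve+\Phi(X_T)=g(X_T,Y_T)+\ve\tl W_{T-t_0}$. Writing $G:=g(X_T,Y_T)$ and $Z:=\ve\tl W_{T-t_0}$, the decisive structural fact is that $\tl W$ is independent of $W$, so that under any admissible $\alpha\in\A$ the increment $Z$ is independent of $(X_T,Y_T)$, with $E^\alpha_{t_0,x,y}Z=0$ and $E^\alpha_{t_0,x,y}Z^2=\ve^2(T-t_0)$.

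Next, for a fixed control I would compute the mean and variance of the perturbed reward. Independence together with $E^\alpha Z=0$ gives $E^\alpha g(X_T,Y_T^\ve)=E^\alpha G$, and, since the cross term $E^\alpha[GZ]=E^\alpha G\cdot E^\alpha Z$ vanishes,
\[
\mathop{\mbox{Var}^\alpha}\big(g(X_T,Y_T^\ve)\big)=\mathop{\mbox{Var}^\alpha}(G)+\ve^2(T-t_0).
\]
Consequently the mean--variance payoff of every admissible $\alpha$ satisfies $J^\ve(\alpha)=J(\alpha)-\theta\ve^2(T-t_0)$. Equivalently, at the level of the inner problems of \eqref{vtilde}, one obtains $V^\ve(t_0,x,y,\psi)=V(t_0,x,y,\psi)-\theta\ve^2(T-t_0)$ for every $\psi$: expanding $(1-2\theta\psi)(G+Z)-\theta(G+Z)^2$ and taking expectations removes the $Z$-linear contribution (because $E^\alpha Z=0$ and $E^\alpha[GZ]=0$) and leaves the single extra summand $-\theta\,E^\alpha Z^2=-\theta\ve^2(T-t_0)$.

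Finally, since this shift is a constant that depends on neither $\alpha$ nor $\psi$, it passes through both optimisations: taking $\sup_\alpha$, and then $\sup_\psi$ as in \eqref{extern1fin}, yields $v^\ve(t_0,x,y)=v(t_0,x,y)-\theta\ve^2(T-t_0)$, so in particular $v^\ve\le v$ and $\sup_{x,y}|v^\ve(t_0,x,y)-v(t_0,x,y)|=\theta\ve^2(T-t_0)$, which is exactly \eqref{boundfin}. No approximation is lost here because the perturbation is additive and control-independent, so the suprema commute with it trivially.

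The only genuinely delicate point, and the step I would treat most carefully, is the vanishing of the covariance $E^\alpha[GZ]$, i.e.\ the assertion that the admissible controls cannot exploit the auxiliary noise $\tl W$ to correlate the outcome with the future increment $\tl W_{T-t_0}$. This rests on fixing the information structure so that $\A$ consists of strategies adapted to the filtration generated by $W$, which keeps $(X_T,Y_T)$ independent of $\tl W$; if one instead admits feedback strategies depending on $Y_t^\ve$, I would recover independence by conditioning on $\mathcal F_T^{W}$ and using that the centred increment $\tl W_{T-t_0}$ is independent of the past. Once this independence is secured, everything reduces to the elementary variance decomposition above.
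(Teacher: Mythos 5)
Your proposal is correct and follows essentially the same route as the paper: both decompose $g(X_T,Y_T^\ve)=g(X_T,Y_T)+\ve\tilde W_{T-t_0}$ and exploit the independence and zero mean of the auxiliary Brownian increment so that, after expanding the square, the only surviving term is $-\theta\ve^2(T-t_0)$. Your only refinement is to observe that this term is a constant shift independent of $\alpha$ and $\psi$, so it passes through both suprema and the bound holds with equality, whereas the paper moves the suprema inside the absolute value and records only the inequality; both arguments rest on the same (implicit in the paper, explicit in your last paragraph) convention that admissible controls do not depend on $\tilde W$.
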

\begin{proof}  We
have,
\begin{align}\label{e10}
&|v^{\varepsilon}(t_0,x,y)-v(t_0,x,y)|\nonumber\\ \nonumber
\\& \le \sup_\psi |\big(V^{\varepsilon}(t_0,x,y,\psi )-\theta
\psi^2-V(t_0,x,y,\psi)+\theta \psi^2\big)| \nonumber\\
&= \sup_\psi|\Big\{\sup_{\alpha\in \cal{A}}E_{t_0,x,y}^\alpha\Big(g(X_T,Y_T^\ve)[1-2\theta \psi]-\theta
\big(g(X_T,Y_T^\ve)\big)^2\Big)
\nonumber\\
&\hspace{3cm} -\sup_{\alpha\in
{\cal A}}E_{t_0,x,y}^\alpha \Big(g(X_T,Y_T)[1-2\theta \psi]-\theta
\big(g(X_T,Y_T)\big)^2\Big)\Big\}| \nonumber\\
&\le\sup_\psi\sup_{\alpha\in \cal{A}}|E_{t_0,x,y}^\alpha\Big\{\big(g(X_T,Y_T^\ve)-g(X_T,Y_T)\big)[1-2\theta \psi]
\nonumber\\
&\hspace{6cm}
-\theta\Big(\big(g(X_T,Y_T^\ve)\big)^2-
\big(g(X_T,Y_T^\ve)\big)^2\Big)\Big\}|\nonumber\\
&=\sup_\psi\sup_{\alpha\in \cal{A}} |E_{t_0,x,y}^\alpha\Big\{[1-2\theta\psi]\int_{t_0}^T\ve\,d\tilde W_t-\theta\big(g(X_T,Y_T^\ve)
\nonumber\\
&\hspace{5cm}
-g(X_T,Y_T)\big)\big(g(X_T,Y_T^\ve)+g(X_T,Y_T)\big)\Big\}|\nonumber\\
&=\sup_\psi\sup_{\alpha\in \cal{A}} |E_{t_0,x,y}^\alpha\Big\{[1-2\theta\psi]\int_{t_0}^T\ve\,d\tilde W_t-\theta\Bigg(2\int_{t_0}^T\ve\,d\tilde W_t\int_{t_0}^Tf(t,X_t)\,dt\nonumber\\
& \hspace{5cm}
+\Big(\int_{t_0}^T\ve\,d\tilde W_t\Big)^2+2\Phi(X_T)\int_{t_0}^T\ve\,d\tilde W_t\Bigg)\Big\}|\nonumber\\
&\le |\theta|(\varepsilon^2 (T-t_0)+\varepsilon C_T\sqrt{T-t_0}):=C_{\varepsilon,\theta,T}.
\end{align}
The last inequality comes from the growth assumptions on (A$_0$) and the use of 
Cauchy--Bunyakovsky-Schwarz inequality along with  moment estimates for $X_T$ and $Y_T$. We add that the reason for using Cauchy--Bunyakovsky-Schwarz inequality is dependence of any  strategy $\alpha$ on $\tilde W$ for any $\varepsilon>0$ and, hence, dependence of $X$ and $\tilde W$. 
\end{proof}
Clearly, the power function $(T-t_0)^{1/2}$ in (\ref{e10}) can be replaced by any $(T-t_0)^\beta$ with $0<\beta<1$ at the expence of the constant $C_T$, if we use H\"older's inequality instead.

\section{Optimal Strategies}

\subsection{Sobolev approach}\label{sec:51}
While working with solutions of HJB equations in Sobolev spaces, there is a verification theorem that ensures the optimality of the strategy that corresponds to the particular solution, or, at least a nearly optimality of a smoothed version of such a strategy; here smoothing is available due to the convexity of the set $A$. 
Since the initial problem has been regularised, we can now only hope for ``almost-optimal'' strategies for the original problem.

\begin{Definition}
Let $\delta\ge 0$. A strategy $\alpha\in \mathcal{A}$ is said to
be $\delta-$optimal for $(t,x)$ if $v(t,x)\le
v^\alpha(t,x)+\delta$, where
$v(t,x)=\sup_{\alpha\in\mathcal{A}}
v^\alpha(t,x)$. We say that there exists a nearly optimal strategy iff a $\delta$-optimal strategy can be found for any $\delta>0$.
\end{Definition}
\begin{Lemma}\label{lestrat}

Under assumptions (A$_0$), for any strategy $\alpha\in\mathcal{A}$, we have the following
bounds
\begin{equation}\label{epszone}
|v^{\varepsilon,\alpha}(t_0,x,y)-v^{\alpha}(t_0,x,y)|\le
C_{\varepsilon,\theta,T}.
\end{equation}
\end{Lemma}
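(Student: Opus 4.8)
The plan is to show that for a \emph{fixed} strategy $\alpha$, the value functionals $v^{\varepsilon,\alpha}$ and $v^{\alpha}$ differ only through the presence of the extra noise term $\varepsilon \tilde W_{t-t_0}$ in the $Y$-dynamics, and that this difference can be bounded exactly as in Theorem~\ref{Thm6fin}. Since the strategy is now held fixed rather than optimised over, the argument is in fact a simplification of the proof of Theorem~\ref{Thm6fin}: the outer $\sup_{\alpha \in \mathcal{A}}$ disappears, and only the dual optimisation over $\psi$ remains to be controlled.

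First I would write out $v^{\varepsilon,\alpha}$ and $v^{\alpha}$ using the $\psi$-representation, so that
$$
v^{\varepsilon,\alpha}(t_0,x,y) = \sup_{\psi} \Big\{ V^{\varepsilon,\alpha}(t_0,x,y,\psi) - \theta \psi^2 \Big\},
$$
and similarly without the $\varepsilon$, where $V^{\alpha}$ is the functional $E^{\alpha}_{t_0,x,y}\big((1-2\theta\psi)g(X_T,Y_T) - \theta (g(X_T,Y_T))^2\big)$ evaluated at the fixed $\alpha$. Using the elementary inequality $|\sup_\psi a(\psi) - \sup_\psi b(\psi)| \le \sup_\psi |a(\psi) - b(\psi)|$, the $\theta\psi^2$ terms cancel and I am left to bound $\sup_\psi |V^{\varepsilon,\alpha} - V^{\alpha}|$.

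Next I would expand the difference pointwise in $\psi$. The key observation, already used in Theorem~\ref{Thm6fin}, is that $g(X_T,Y_T^\varepsilon) - g(X_T,Y_T) = Y_T^\varepsilon - Y_T = \int_{t_0}^T \varepsilon \, d\tilde W_t$, since $X_T$ is unaffected by the regularisation and the $\Phi(X_T)$ and $\int f$ pieces of $g$ coincide. Writing $g^\varepsilon - g$ for this increment and using $(g^\varepsilon)^2 - g^2 = (g^\varepsilon - g)(g^\varepsilon + g)$, the difference becomes
$$
E^{\alpha}_{t_0,x,y}\Big\{ (1-2\theta\psi)\!\int_{t_0}^T \!\varepsilon\, d\tilde W_t - \theta\Big( 2\!\int_{t_0}^T\!\varepsilon\, d\tilde W_t \!\int_{t_0}^T\! f\,dt + \big(\!\int_{t_0}^T\!\varepsilon\, d\tilde W_t\big)^2 + 2\Phi(X_T)\!\int_{t_0}^T\!\varepsilon\, d\tilde W_t \Big) \Big\}.
$$
Because $\tilde W$ is independent of $W$ (and hence of $X$) and has zero mean, every term containing a single stochastic integral $\int \varepsilon\, d\tilde W_t$ has expectation zero; only the quadratic term $(\int_{t_0}^T \varepsilon\, d\tilde W_t)^2$ survives, and by the It\^o isometry its expectation equals $\varepsilon^2 (T-t_0)$. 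This leaves $\theta \varepsilon^2 (T-t_0)$ uniformly in $\psi$, giving the claimed bound with $C = \theta(T-t_0)$.

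The only genuine subtlety, which I expect to be the main point requiring care rather than a true obstacle, is justifying that the expectations of the cross terms vanish: one must check the requisite integrability so that the independence-plus-zero-mean argument is legitimate. This follows from the linear growth of $f$ and the polynomial growth of $\Phi$ in assumptions (A$_0$), together with the standard moment bounds for the solution $(X_t)$ of the SDE, which guarantee that $\int_{t_0}^T f\,dt$ and $\Phi(X_T)$ lie in every $L^p$; independence of $\tilde W$ then lets the expectation of each cross term factorise into a finite moment times $E\int_{t_0}^T \varepsilon\, d\tilde W_t = 0$. Once this integrability is in place the estimate is immediate, and indeed the bound here is uniform in $(t_0,x,y)$ for exactly the same reason it was in Theorem~\ref{Thm6fin}.
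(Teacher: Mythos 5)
Your proof is correct and takes essentially the same route as the paper: the paper's own proof of Lemma~\ref{lestrat} simply invokes the argument of Theorem~\ref{Thm6fin} for a fixed strategy, which is exactly what you carry out --- cancelling the $\theta\psi^2$ terms via the dual representation, identifying $g(X_T,Y_T^\varepsilon)-g(X_T,Y_T)=\int_{t_0}^T\varepsilon\,d\tilde{W}_t$, and using independence of $\tilde{W}$ together with the It\^o isometry so that only the term $\theta\varepsilon^2(T-t_0)$ survives. Your explicit check of integrability of the cross terms is a sensible elaboration of a point the paper leaves implicit, not a deviation in method.
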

\begin{proof}
Quite similarly to Theorem \ref{Thm6fin}, by the same calculus without $\sup_{\alpha}$ one can show  that 
\begin{equation}\nonumber |v^{\varepsilon,\alpha}(t_0,x,y)-v^{\alpha}(t_0,x,y)|\le C_{\varepsilon,\theta,T}.\end{equation}
The only difference is that now the bounds are written for a fixed particular strategy. 
Therefore, 
\begin{equation}
v^{\varepsilon,\alpha}(t_0,x,y) - C_{\varepsilon,\theta,T}\le
v^{\alpha}(t_0,x,y)\le v^{\varepsilon,\alpha}(t_0,x,y) +
C_{\varepsilon,\theta,T}, 
\end{equation}
as required.
\end{proof}

\begin{Theorem}\label{Thm53}
Assume (A$_0$). Let the strategy $\bar\alpha^\varepsilon\in\mathcal{A}$ be an 
optimal  strategy for the problem (\ref{extern1fin}), or if the
supremum is not attained, let the strategy
$\tilde\alpha^\varepsilon\in\mathcal{A}$ be a $\kappa-$optimal
strategy for the same problem . Then, the same strategy is
$\delta$-optimal for the original degenerate value function with
appropriate choice of the constant $\delta$, so that $\delta \to 0$ as $\varepsilon, \kappa \to 0$, i.e., there exists a nearly optimal strategy.

\end{Theorem}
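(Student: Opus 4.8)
The plan is to obtain the $\epsilon$-optimality by a telescoping (triangle-inequality) argument that glues together the two uniform estimates already proved: Theorem~\ref{Thm6fin}, which controls the gap between the regularised and original \emph{value functions}, and Lemma~\ref{lestrat}, which controls the gap between the regularised and original payoffs \emph{along a fixed strategy}. Both bounds are of the same order $\theta(T-t_0)\ve^2$, so their combination will again be $O(\ve^2)$, and no genuinely new estimate is needed.

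Concretely, write $\bar\alpha = \bar\alpha^\ve$ for the strategy that is optimal for the regularised external problem \eqref{extern1fin}, so that $v^{\ve,\bar\alpha}(t_0,x,y) = v^\ve(t_0,x,y)$. Since $v = \sup_{\alpha} v^\alpha \ge v^{\bar\alpha}$, it suffices to bound $v - v^{\bar\alpha}$ from above. I would split
\[
v - v^{\bar\alpha} = \big(v - v^\ve\big) + \big(v^\ve - v^{\ve,\bar\alpha}\big) + \big(v^{\ve,\bar\alpha} - v^{\bar\alpha}\big),
\]
all evaluated at $(t_0,x,y)$. The first bracket is bounded by $\theta(T-t_0)\ve^2$ by Theorem~\ref{Thm6fin}; the second vanishes by optimality of $\bar\alpha$; the third is bounded by $\theta(T-t_0)\ve^2$ by Lemma~\ref{lestrat}. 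Hence $v - v^{\bar\alpha} \le 2\theta(T-t_0)\ve^2$, i.e.\ $\bar\alpha$ is $\epsilon$-optimal with $\epsilon = 2\theta(T-t_0)\ve^2$. For the case in which the supremum in \eqref{extern1fin} is not attained, I would replace the exact optimality $v^\ve - v^{\ve,\bar\alpha} = 0$ by the $\delta$-optimality $v^\ve - v^{\ve,\tilde\alpha} \le \delta$ of $\tilde\alpha = \tilde\alpha^\ve$; the same three-term split then yields $v - v^{\tilde\alpha} \le 2\theta(T-t_0)\ve^2 + \delta$, so that $\tilde\alpha$ is $\epsilon$-optimal with $\epsilon = 2\theta(T-t_0)\ve^2 + \delta$. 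Choosing $\ve$ and $\delta$ small makes $\epsilon$ as small as desired.

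The step requiring the most care is the identity $v^{\ve,\bar\alpha} = v^\ve$ (and its $\delta$-relaxation): one must interpret correctly what ``optimal strategy for \eqref{extern1fin}'' means, because \eqref{extern1fin} is a double supremum over both $\psi$ and $\alpha$. Since both operations are suprema, they may be interchanged freely, giving $v^\ve = \sup_\psi\sup_\alpha\big[V^{\ve,\alpha} - \theta\psi^2\big] = \sup_\alpha v^{\ve,\alpha}$. The strategy $\bar\alpha$ that realises the inner supremum of $V^\ve$ at the externally optimal $\psi^\ast$ then satisfies $v^{\ve,\bar\alpha} \ge V^{\ve,\bar\alpha}(\psi^\ast) - \theta(\psi^\ast)^2 = v^\ve \ge v^{\ve,\bar\alpha}$, forcing equality. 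I would flag that $\bar\alpha$ generally depends on $\psi^\ast$ through the terminal condition of the inner HJB problem, so the strategy being verified is the feedback law evaluated at the externally optimal $\psi$, and not at an arbitrary one; this is the only subtlety beyond the otherwise routine chaining of the two bounds.
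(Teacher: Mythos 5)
Your proof is correct, and it takes a genuinely different decomposition from the paper's. The paper never invokes Theorem~\ref{Thm6fin} here; instead it introduces a (near-)maximizer of the \emph{degenerate} problem --- an optimal $\bar\alpha$, or a $\gamma$-optimal $\tilde\alpha$ when that supremum is not attained --- and applies Lemma~\ref{lestrat} twice, once to that strategy and once to $\bar\alpha^\varepsilon$, chaining
\[
v^{\bar\alpha} - C\ve^2 \le v^{\ve,\bar\alpha} \le v^{\ve,\bar\alpha^\ve} \le v^{\bar\alpha^\ve} + C\ve^2 .
\]
This forces a three-way case analysis (degenerate supremum attained / only $\gamma$-optimal strategy exists / regularized supremum also not attained) and leaves the extra $\gamma$ in the final constants: the paper obtains $\epsilon = 2C\ve^2$, $\epsilon = \gamma + 2C\ve^2$ and $\epsilon = \gamma+\delta+2C\ve^2$ in the respective cases. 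Your decomposition replaces the first link of that chain by the value-function estimate of Theorem~\ref{Thm6fin}, $v \le v^\ve + C\ve^2$, so you never need any near-optimizer of the degenerate problem; the case analysis collapses to whether the regularized supremum is attained, and the constants tighten to $2C\ve^2$ and $2C\ve^2+\delta$, with no $\gamma$. Your explicit justification that $v^{\ve,\bar\alpha^\ve} = v^\ve$ --- interchanging the two suprema in \eqref{extern1fin} and evaluating the inner maximizing strategy at the externally optimal $\psi^\ast$, noting that the strategy depends on $\psi^\ast$ --- is also more careful than the paper, which simply asserts $v^{\ve,\bar\alpha^\ve} \ge v^{\ve,\alpha}$ for all $\alpha$ without comment. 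Both arguments rest on the same two $O(\ve^2)$ estimates and the same chaining idea, so the difference is one of bookkeeping; but your version is the more economical one and yields a slightly sharper conclusion.
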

\begin{proof} Suppose that the value function of the degenerate
problem attains its supremum for a strategy $\bar \alpha \in {\cal A}$. Then, we
would have from Lemma \ref{lestrat}, 
\begin{equation}
v^{\bar\alpha}(t_0,x,y) - C_{\varepsilon,\theta,T}\le
v^{\varepsilon,\bar\alpha}(t_0,x,y)\le v^{\bar\alpha}(t_0,x,y) +
C_{\varepsilon,\theta,T}.
\end{equation}
Furthermore, because the strategy $\bar\alpha^\varepsilon$ is
optimal for the regularised value function (assuming that the
supremum is attained), we know that
\begin{equation}
v^{\varepsilon,\bar\alpha^\varepsilon}(t_0,x,y)\ge
v^{\varepsilon,\bar\alpha}(t_0,x,y).
\end{equation}
So,
\begin{equation}
v^{\bar\alpha}(t_0,x,y) - C_{\varepsilon,\theta,T}\le
v^{\varepsilon,\bar\alpha}(t_0,x,y)\le
v^{\varepsilon,\bar\alpha^\varepsilon}(t_0,x,y)\le
v^{\bar\alpha^\varepsilon}(t_0,x,y) + C_{\varepsilon,\theta,T}.
\end{equation}
Hence
\begin{equation}
v^{\bar\alpha}(t_0,x,y) - 2C_{\varepsilon,\theta,T}\le
v^{\bar\alpha^\varepsilon}(t_0,x,y),
\end{equation}
i.e. $\bar\alpha^\varepsilon$ is $2C_{\varepsilon,\theta,T}-$optimal for
$v^\alpha$ or $\delta=2C_{\varepsilon,\theta,T}$ in this case.

In the case if the degenerate value function does not attain a
supremum, a $\gamma-$optimal strategy exists for any $\gamma>0$; let us denote any such strategy by  $\tilde\alpha$.
Then
\begin{equation}\label{almostopt}
v^{\tilde\alpha}(t_0,x,y)\ge
\sup_{\alpha\in\mathcal{A}}v^{\alpha}(t_0,x,y)-\gamma
\end{equation}
and due to the bounds (\ref{epszone}) and inequality
(\ref{almostopt}) we have
\begin{equation}
\sup_{\alpha\in\mathcal{A}}v^{\alpha}(t_0,x,y)-\gamma-C_{\varepsilon,\theta,T}\le
v^{\tilde\alpha}(t_0,x,y) - C_{\varepsilon,\theta,T}\le
v^{\varepsilon,\tilde\alpha}(t_0,x,y)\le v^{\tilde\alpha}(t_0,x,y) +
C_{\varepsilon,\theta,T}.
\end{equation}
Now, suppose again that
$v^{\varepsilon,\bar\alpha^\varepsilon}(t_0,x,y)=\sup_{\alpha\in\mathcal{A}}
v^{\varepsilon,\alpha}(t_0,x,y). $ That means
$v^{\varepsilon,\bar\alpha^\varepsilon}(t_0,x,y)\ge
v^{\varepsilon,\tilde\alpha}(t_0,x,y).$

So, \begin{eqnarray}
\sup_{\alpha\in\mathcal{A}}v^{\alpha}(t_0,x,y)-\gamma-C_{\varepsilon,\theta,T}\le
v^{\tilde\alpha}(t_0,x,y)-C_{\varepsilon,\theta,T}\le
v^{\varepsilon,\tilde\alpha}(t_0,x,y)\nonumber\\
\le v^{\varepsilon,\bar\alpha^\varepsilon}(t_0,x,y)\le
v^{\bar\alpha^\varepsilon}(t_0,x,y)+ C_{\varepsilon,\theta,T}.
\end{eqnarray}
Thus,
\begin{equation}
\sup_{\alpha\in\mathcal{A}}v^{\alpha}(t_0,x,y)\le
v^{\bar\alpha^\varepsilon}(t_0,x,y)+\gamma+2C_{\varepsilon,\theta,T},
\end{equation}
i.e. $\bar\alpha^\varepsilon$ is an $(\gamma+2C_{\varepsilon,\theta,T})-$optimal
strategy for $v^{\alpha}(t_0,x,y)$ or
$\delta=\gamma+2C_{\varepsilon,\theta,T}$ in this case.

Finally, in the case that $\sup_{\alpha\in\mathcal{A}}
v^{\varepsilon,\alpha}(t_0,x,y)$ is not attained, let us consider a $\kappa-$optimal
strategy $\tilde\alpha^\varepsilon$ such that
$v^{\varepsilon,\tilde\alpha^\varepsilon}(t_0,x,y)\le\sup_{\alpha\in\mathcal{A}}
v^{\varepsilon,\alpha}(t_0,x,y)-\kappa.$ Following the same
reasoning as previously (note that $\tilde\alpha$ is
$\gamma-$optimal for $v^\varepsilon$), we get
\begin{eqnarray}
\sup_{\alpha\in\mathcal{A}}v^{\alpha}(t_0,x,y)-\gamma-\kappa-C_{\varepsilon,\theta,T}\le
v^{\tilde\alpha}(t_0,x,y)-C_{\varepsilon,\theta,T}-\kappa\nonumber\\\le
v^{\varepsilon,\tilde\alpha}(t_0,x,y)-\kappa \le
v^{\varepsilon,\tilde\alpha^\varepsilon}(t_0,x,y)\le
v^{\tilde\alpha^\varepsilon}(t_0,x,y)+ C_{\varepsilon,\theta,T}.
\end{eqnarray}
Therefore,
\begin{equation}
\sup_{\alpha\in\mathcal{A}}v^{\alpha}(t_0,x,y)\le
v^{\tilde\alpha^\varepsilon}(t_0,x,y)+\gamma+\kappa+2C_{\varepsilon,\theta,T},
\end{equation}
i.e. $\tilde \alpha^\varepsilon$ is an
$(\gamma+\kappa+2C_{\varepsilon,\theta,T})-$optimal strategy for
$v^{\alpha}(t_0,x,y)$ or $\delta=\gamma+\kappa+2C_{\varepsilon,\theta,T}$. Since $\varepsilon, \gamma>0$ and $\kappa>0$ here can be chosen arbitrarily small, this implies the statement about a nearly optimal strategy as required. 
\end{proof}

Finally, a little bit about Markov strategies. Recall that Markov strategies are sufficient for the regularised problem (\ref{Yt-epsilon}), see \cite{kry}. Due to the previous result, we have also the following
\begin{Proposition}
Let assumptions (A$_0$) hold. Then for the equation (\ref{Yt}) Markov strategies are also sufficient. 

\end{Proposition}
\noindent
\begin{proof}
We leave the reader to consult  \cite{kry} about using Markov strategies. Since we deal everywhere with "first moment theory" and additionally
an optimal $\bar\psi$ can always be found in a bounded real interval, the class of Markov strategies is, indeed,  sufficient for the problem due to Theorem  \ref{Thm53}. \end{proof}
\noindent
Alternatively, we could refer directly to the calculus in the proof of Theorem \ref{Thm53}. Emphasize that Markov strategies here even for the degenerate system depend on $(x,y)$, not just on $x$ variable.

\subsection{Viscosity approach}

When working with viscosity solutions, there is no verification theorem that can be applied under the assumptions made in this paper (see Gozzi et al. \cite{GSZ2009} for the latest results on the verification theorem for viscosity solutions). One possible approach is to verify the optimality of the strategies using Monte Carlo simulations using the strategy calculated from the solution of the HJB equation and compare the value of the value function obtained by simulation to the one from the numerical scheme.

\section{ Concluding Remarks}

In this paper, we formulated a general mean-variance problem in continuous time that includes a functional with two terms: an integral that depends on the whole trajectory of the controlled process and a terminal time one. We interpreted the problem first as a terminal time problem, through the introduction of a coupled state process with an additional dimension and then transformed it into a superposition of a static and a dynamic optimization problem, where the latter is feasible for dynamic programming methods and for which we were able to write down an HJB equation. We proved existence and uniqueness of solutions both in viscosity sense and also in classical (Sobolev) sense. The advantage of the first approach is that there is no need to address the inherent degeneracy of the coupled state process, whereas numerical solutions can be employed to solve the problem and to even show optimality thought Monte Carlo simulations. A verification theorem is not readily applicable under the assumptions we use. 

When following the Sobolev approach, a regularisation of the state process is required. This has the advantage that a verification theorem can be obtained (through It\^o-Krylov's formula). We then showed that strategies obtained through this route are nearly optimal.    

~


\noindent {\bf Acknowledgments:}\\
The first author has beens partially supported by EPSRC grant EP/N013980/1 and the Alan Turing Institute EP/N510129/1. For the second author 
this study has been funded by the Russian Academic Excellence Project '5-100'.   The authors are also grateful to 
Jan Palczewski for valuable remarks.


\begin{thebibliography}{99}
\bibitem{A-Veretennikov2010} Aivaliotis, G. and  Veretennikov, A. Yu., 2010. {\em On Bellman's equations for mean and variance control of a markov diffusion}. Stochastics: An International Journal of
Probability and Stochastic Processes, 82: 1, 41-51.

\bibitem{A-Palczewski2014} Aivaliotis, G., Palczewski, J., 2014. {\em Investment strategies and compensation of a mean-variance optimizing fund manager}. European Journal of Operational Research, 234:2, 561-570.

\bibitem{A-Palczewski2010} Aivaliotis, G., Palczewski, J., 2010. {\em Tutorial for viscosity solutions in optimal control of diffusions}. Available at SSRN: http://ssrn.com/abstract=1582548.

\bibitem{bielecki2005} Bielecki, T., Jin, H., Pliska, S.R., Zhou, X.Y., 2005. {\em Continuous-time mean-variance portfolio selection with bankruptcy prohibition}. Mathematical Finance 15, 213-244.

\bibitem{bielecki2000} Bielecki, T., Pliska, S.R., Sherris, M., 2000. {\em Risk sensitive asset allocation}. Journal of Economic Dynamics and Control 24, 1145-1177.

\bibitem{Bjorketal2014} Bj\"ork, T., Murgoci, A., Zhou, X. Y., 2014. {\em Mean-variance portfolio optimization with state-dependent risk aversion}. Mathematical Finance 24, 1467-9965.

\bibitem{GSZ2009} Gozzi, F., \'{S}wiech, A., Zhou, X.Y., 2009. {\em Erratum: "A corrected proof of the stochastic verification theorem within the framework of viscosity solutions"}. SIAM Journal on Control Optimization 48:6, 4177-4179.


\bibitem{kry} Krylov, N.~V., 1980. {\em Controlled diffusion processes}, Springer-Verlag.


\bibitem{li2000} Li, D., Ng, W.-L., 2000. {\em Optimal dynamic portfolio selection: Multiperiod mean-variance formulation}. Mathematical Finance 10:3, 387-406.

\bibitem{lim2004} Lim, A.E.B., 2004. {\em Quadratic hedging and mean-variance portfolio selection in an incomplete market}. Mathematics of Operations Research 29, 132-161.

\bibitem{Mar} Markowitz, H.~M., 1952. {\em Portfolio selection}. Journal of Finance 7:1
, 77-91.

\bibitem{PedersenPeskir2017} Pedersen, J.L., Peskir, G., 2017. {\em Optimal mean-variance portfolio selection}. Mathematics and Financial Economics 11, 137-160.

\bibitem{pham} Pham, H., 2009. {\em Continuous-time stochastic control and optimization with financial applications}. Springer.


\bibitem{Tse2013} Tse, S., T., Forsyth, P., A., Li, Y., 2014. {\em Preservation of scalarization optimal points in the embedding technique for continuous time mean-variance optimization}. SIAM Journal on Control and Optimization, 52, 1527-1546.


\bibitem{wang2010} Wang, J., Forsyth, P.A., 2010. {\em Numerical solution of the Hamilton-Jacobi-Bellman formulation for continuous time mean variance asset allocation}. Journal of Economic Dynamics \& Control 34, 207-230.

\bibitem{Zhou2000} Zhou, X.~Y. and Li, D., 2000. {\em Continuous-time mean-variance
portfolio selection: A stochastic LQ framework}. Applied Mathematics and Optimization, 42, 19-33.


\end{thebibliography}
\end{document}